\newtheorem{theorem}{Theorem}[section]
\newtheorem{proposition}[theorem]{Proposition}
\numberwithin{equation}{section}
\begin{document}

\baselineskip=15pt

\title[Rank one connections on abelian varieties, II]{Rank
one connections on abelian varieties, II}

\author[I. Biswas]{Indranil Biswas}

\address{School of Mathematics, Tata Institute of Fundamental
Research, Homi Bhabha Road, Bombay 400005, India}

\email{indranil@math.tifr.res.in}

\author[J. Hurtubise]{Jacques Hurtubise}

\address{Department of Mathematics, McGill University, Burnside
Hall, 805 Sherbrooke St. W., Montreal, Que. H3A 2K6, Canada}

\email{jacques.hurtubise@mcgill.ca}

\author[A. K. Raina]{A. K. Raina}

\address{Theoretical Physics Department, Tata Institute of
Fundamental Research, Homi Bhabha Road, Bombay 400005, India}

\email{raina@tifr.res.in}

\subjclass[2000]{14K20}

\keywords{Complex torus, line bundle, connection, 
$\Omega$-torsor}

\date{}

\begin{abstract}
Given a holomorphic line bundle $L$ on a compact complex torus 
$A$, there are two naturally associated holomorphic 
$\Omega_A$--torsors over $A$: one is 
constructed from the Atiyah exact sequence for $L$, and the 
other is constructed using the line bundle $(p^*_1 L^*)\otimes 
(\alpha^*L)$, where $\alpha$ is the addition map on $A\times A$, and $p_1$
is the projection of $A\times A$ to the first factor. In 
\cite{BHR}, it was shown that these two torsors are isomorphic.
The aim here is to produce a canonical isomorphism between them
through an explicit construction.
\end{abstract}

\maketitle

\section{Introduction}

Let $A$ be a complex abelian variety and $L$ a holomorphic line 
bundle over $A$. The sheaf of holomorphic connections on $L$
defines a torsor ${\mathcal C}_L$ on $A$ for the holomorphic cotangent
bundle $\Omega_A$.

Let $\alpha\, , p_1\, ,p_2 \, :\, A\times A\, \longrightarrow\, 
A$ be the addition map and the projections respectively. The 
holomorphic line bundle
$$
{\mathcal L}\, :=\, (p^*_1 L^*)\otimes (\alpha^*L)\,
\longrightarrow\,A\times A\,\stackrel{p_2}{\longrightarrow}\, A
$$
will be considered as a holomorphic family, parametrized by 
$A$, of topologically trivial holomorphic line bundles on $A$.
We have an $\Omega_A$--torsor
${\mathcal Z}_L\, \longrightarrow\, A$
whose holomorphic sections over any open subset $U\, \subset\,
A$ are the holomorphic families of relative holomorphic 
connections on ${\mathcal L}\vert_{A\times U}$.

In \cite{BHR} it was crucially used that the two torsors
${\mathcal C}_L$ and ${\mathcal Z}_L$ are holomorphically, 
or equivalently, algebraically, isomorphic (see \cite[Proposition 
2.1]{BHR}). The proof of Proposition 2.1 of \cite{BHR} was 
carried out by comparing the cohomological invariants associated 
to the torsors.

Our aim here is to give an explicit construction of a 
holomorphic isomorphism between the two torsors. The isomorphism 
is canonical in the sense that its construction does require 
making any choices.

We work with a compact complex torus; we do not need $A$ to be
algebraic.

\section{A criterion for isomorphism of torsors}

Let $M$ be a connected complex manifold. Let $\mathcal V$ be a 
holomorphic vector bundle over $M$.

A $\mathcal V$--\textit{torsor} on $M$ is a holomorphic fiber 
bundle
$p\, :\, Z\, \longrightarrow\, X$ and a holomorphic map from
the fiber product
$$
\varphi\, :\, Z\times_M {\mathcal V}\, \longrightarrow\, Z
$$
such that
\begin{enumerate}
\item $p\circ\varphi\,=\, p\circ p_Z$, where $p_Z$ is the
natural projection of $Z\times_M \mathcal V$ to $Z$,

\item the map $Z\times_M {\mathcal V}\, \longrightarrow\, 
Z\times_M Z$
defined by $p_Z\times \varphi$ is an isomorphism,

\item $\varphi(\varphi(z\, ,v)\, ,w)\,=\, \varphi(z\, ,v+w)$.
\end{enumerate}

A $\mathcal V$--torsor $(Z\, ,p\, ,\varphi)$ is called 
\textit{trivializable} if there is a holomorphic isomorphism 
$$
\beta\, : {\mathcal V}\,\longrightarrow\, Z
$$
such that $p\circ\beta$ is the natural projection of $\mathcal V$ to $M$, and
$$
\beta^{-1}\circ \varphi\circ (\beta\times\text{Id}_{\mathcal 
V})\, :\, {\mathcal V}\times_M {\mathcal V}\, \longrightarrow\,
{\mathcal V}
$$
is the fiberwise addition homomorphism on ${\mathcal V}\times_M {\mathcal V}$. A
holomorphic isomorphism $\beta$ satisfying 
the above conditions is called a \textit{trivialization} of $Z$.

Any $\mathcal V$--torsor $Z$ has a $C^\infty$ section $M\,\longrightarrow 
\, Z$ because the fibers of $Z$ are contractible. It has a 
holomorphic section if and only if it is trivializable.

Take a $\mathcal V$--torsor $(Z\, ,p\, ,\varphi)$. Let
$$
\sigma\,:\, M\, \longrightarrow\, Z
$$
be a $C^\infty$ section, so $p\circ\sigma\,=\, \text{Id}_M$.
Let
$$
d\sigma\, :\, T^{\mathbb R}M\, \longrightarrow\, 
\sigma^* T^{\mathbb R} Z
$$
be the differential of $\sigma$, where $T^{\mathbb R}$ is the 
real tangent bundle. The almost complex structures
on $M$ and $Z$ will be denoted by $J_M$ and $J_Z$ respectively.
Let
$$
\widetilde{\sigma}\, :\, T^{\mathbb R}M\, \longrightarrow\, 
\sigma^* T^{\mathbb R} Z\, ,~\, ~ \, v\, \longmapsto\,
d\sigma (J_M(v)) - J_Z(d\sigma (v))
$$
be the obstruction for $\sigma$ to be holomorphic. Since the 
projection $p$ is holomorphic, we have
$$
dp (\widetilde{\sigma}(v))\,=\, dp(d\sigma (J_M(v))) - dp
(J_Z(d\sigma (v)))\,=\, J_M(v) - J_M(dp(d\sigma(v)))\,=\, 0\, ,
$$
where $dp$ is the differential of $p$. Hence 
$\widetilde{\sigma}(v)$ is an element of ${\mathcal V}_x$ if
$v\, \in\, T^{\mathbb R}_xM$ (the vertical tangent subbundle
of $T^{\mathbb R}Z$ for
$p$ is identified with $p^*{\mathcal V}$). Note that
$\widetilde{\sigma}(J_M(v))\,=\,-J_Z(\widetilde{\sigma}(v))$.
Define
\begin{equation}\label{e2}
\widehat{\sigma}\, :\, T^{0,1}M\, \longrightarrow\, {\mathcal 
V}\, ,\, ~\, ~ \, v+\sqrt{-1}\cdot J_M(v)\, \longmapsto\, 
\widetilde{\sigma}(v)\, .
\end{equation}
So $\widehat{\sigma}$ is a smooth $(0\, ,1)$--form with values 
in $\mathcal V$.
Clearly, $\widehat{\sigma}\,=\,0$ if and only if $\sigma$ is 
holomorphic. Equivalently, $\widehat{\sigma}\,=\,0$ if and only 
if $\sigma$ is a trivialization of $Z$.

Let $(Z_1\, ,p_1\, ,\varphi_1)$ and $(Z_2\, ,p_2\, ,\varphi_2)$
be two $\mathcal V$--torsors. Let $\sigma$ and $\tau$ be 
$C^\infty$ 
sections of $Z_1$ and $Z_2$ respectively. We have a unique $C^\infty$ 
isomorphism of $\mathcal V$--torsors
\begin{equation}\label{e3}
\gamma\, :\, Z_1\, \longrightarrow\, Z_2\, ,~\,~\,
\gamma\circ \varphi_1(\sigma(x)\, ,v)\,=\, \varphi_2(\tau(x)\, 
,v) \, ,~\, x\,\in\, M\, , v\,\in\, {\mathcal V}_x\, .
\end{equation}
So, $\gamma\circ\sigma\,=\, \tau$.

\begin{proposition}\label{prop1}
If $\widehat{\sigma}\,=\, \widehat{\tau}$ (constructed as
in \eqref{e2}), then $\gamma$ in \eqref{e3} is holomorphic.
\end{proposition}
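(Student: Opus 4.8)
The plan is to upgrade the given $C^\infty$ isomorphism $\gamma$ to a holomorphic one by checking pointwise that its differential intertwines the two almost complex structures, i.e.\ that $d\gamma\circ J_{Z_1}\,=\, J_{Z_2}\circ d\gamma$ on $T^{\mathbb R}_zZ_1$ for every $z\,\in\, Z_1$. Because $\gamma$ commutes with $p_1$, $p_2$ and, by \eqref{e3}, with the torsor actions, it suffices to do this at the points of $\sigma(M)$ and then transport the conclusion over the rest of $Z_1$ by holomorphic local translations; the transport is handled at the end.

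Fix $x\,\in\, M$ and $w\,\in\, T^{\mathbb R}_{\sigma(x)}Z_1$, and write $w\,=\, d\sigma(u)+\nu$ with $u\,=\, dp_1(w)\,\in\, T^{\mathbb R}_xM$ and $\nu\,\in\,{\mathcal V}_x$ its vertical component. Recall that the vertical subbundle of $T^{\mathbb R}Z_1$ for $p_1$ is identified with $p_1^*{\mathcal V}$, and that this identification is $\mathbb C$--linear because $\varphi_1$ is holomorphic; hence the vertical subbundle is $J_{Z_1}$--invariant and $J_{Z_1}$ acts on it as the complex structure $J_{\mathcal V}$ of ${\mathcal V}$. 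Differentiating the identity $\gamma(\varphi_1(\sigma(x),v))\,=\, \varphi_2(\tau(x),v)$ in $v$ at $v\,=\,0$ shows that $d\gamma$ sends the vertical subspace at $\sigma(x)$ to that at $\tau(x)$ as the identity of ${\mathcal V}_x$, while differentiating $\gamma\circ\sigma\,=\,\tau$ gives $d\gamma\circ d\sigma\,=\, d\tau$; in particular $d\gamma(w)\,=\, d\tau(u)+\nu$. Now split the holomorphy identity according to the decomposition of $w$. On the vertical summand $\nu$, both $d\gamma(J_{Z_1}\nu)$ and $J_{Z_2}(d\gamma(\nu))$ equal $J_{\mathcal V}(\nu)$, so they agree. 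On the summand $d\sigma(u)$, the definition of $\widetilde{\sigma}$ gives $J_{Z_1}(d\sigma(u))\,=\, d\sigma(J_M(u))-\widetilde{\sigma}(u)$; applying $d\gamma$, and using $d\gamma\circ d\sigma\,=\,d\tau$ together with the fact that $d\gamma$ is the identity on the vertical vector $\widetilde{\sigma}(u)$, we obtain $d\tau(J_M(u))-\widetilde{\sigma}(u)$, whereas $J_{Z_2}(d\gamma(d\sigma(u)))\,=\, J_{Z_2}(d\tau(u))\,=\, d\tau(J_M(u))-\widetilde{\tau}(u)$ by the same definition applied to $\tau$. The discrepancy is $\widetilde{\tau}(u)-\widetilde{\sigma}(u)$, and this vanishes since $\widehat{\sigma}\,=\,\widehat{\tau}$ is equivalent to $\widetilde{\sigma}\,=\,\widetilde{\tau}$ (the $(0,1)$--form $\widehat{\sigma}$ of \eqref{e2} is just a repackaging of $\widetilde{\sigma}$). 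Hence $d\gamma(J_{Z_1}w)\,=\, J_{Z_2}(d\gamma(w))$, so $\gamma$ is holomorphic at every point of $\sigma(M)$.

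For the transport, let $z\,\in\, Z_1$ be arbitrary, put $x\,=\, p_1(z)$, and write $z\,=\, \varphi_1(\sigma(x),v_0)$ for the unique $v_0\,\in\,{\mathcal V}_x$. Choose a holomorphic section $s$ of ${\mathcal V}$ over a neighbourhood of $x$ with $s(x)\,=\, v_0$. Translation by $s$, namely $z'\,\longmapsto\, \varphi_1(z',s(p_1(z')))$, is a biholomorphism of $Z_1$ over its domain (it is the composite of the holomorphic $\varphi_1$ with $s$, with inverse the translation by $-s$), and similarly on $Z_2$; by \eqref{e3}, $\gamma$ intertwines these two translations, so near $z$
\[
\gamma\,=\,\bigl(\text{translation by }s\text{ on }Z_2\bigr)\circ\gamma\circ\bigl(\text{translation by }-s\text{ on }Z_1\bigr),
\]
where the inner $\gamma$ is now evaluated at $\sigma(x)\,\in\,\sigma(M)$, the image of $z$ under translation by $-s$. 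Being a composite of maps that are holomorphic at the relevant points, $\gamma$ is holomorphic near $z$; as $z$ was arbitrary, $\gamma$ is holomorphic.

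The one step needing care — bookkeeping rather than a real obstacle — is the pointwise computation of the second paragraph: one must use that the identification of the vertical subbundle with $p_1^*{\mathcal V}$ is complex linear (so $J_{Z_1}$ preserves it and acts there as $J_{\mathcal V}$) and that $d\gamma$ restricts to the identity on vertical vectors over $\sigma(M)$. With those in hand, the holomorphy identity collapses to $\widetilde{\sigma}\,=\,\widetilde{\tau}$, which is precisely the hypothesis $\widehat{\sigma}\,=\,\widehat{\tau}$.
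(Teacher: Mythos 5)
Your proof is correct, but it is organized quite differently from the one in the paper. The paper's argument is structural: it introduces an auxiliary holomorphic fiber bundle $H\,\longrightarrow\, M$ whose fiber over $x$ is the set of torsor isomorphisms $(Z_1)_x\,\longrightarrow\,(Z_2)_x$, observes that $H$ is itself a $\mathcal V$--torsor, views $\gamma$ as a $C^\infty$ section of $H$, and invokes the identity $\widehat{\gamma}\,=\,\widehat{\tau}-\widehat{\sigma}$ (stated as ``straightforward to check'') together with the earlier remark that a section with vanishing $\widehat{\,\cdot\,}$ is holomorphic. You instead verify the Cauchy--Riemann condition $d\gamma\circ J_{Z_1}\,=\,J_{Z_2}\circ d\gamma$ directly on $T^{\mathbb R}Z_1$: first at points of $\sigma(M)$, by splitting tangent vectors into the image of $d\sigma$ and the vertical part and reducing the discrepancy to $\widetilde{\tau}-\widetilde{\sigma}$, and then everywhere else by conjugating with holomorphic local translations $z'\,\longmapsto\,\varphi_i(z',s(p_i(z')))$. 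Your pointwise computation is, in effect, the unpacked verification of the paper's ``straightforward to check'' step, so your version is more self-contained and more elementary, at the cost of being longer and requiring the explicit transport argument that the torsor structure of $H$ renders unnecessary in the paper. One small wording issue in your transport step: the identity $\gamma\,=\,T_s\circ\gamma\circ T_{-s}$ places the inner $\gamma$ at a point of $\sigma(M)$ only when evaluated at $z$ itself (for $z'$ near $z$, the point $T_{-s}(z')$ need not lie on $\sigma(M)$), so what you actually get is that $d\gamma$ is complex linear \emph{at} $z$ rather than that $\gamma$ is ``holomorphic near $z$''; since $z$ is arbitrary this still gives complex linearity of $d\gamma$ everywhere, hence holomorphy, so the conclusion is unaffected.
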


\begin{proof}
Let
$$
q\, :\, H\, \longrightarrow\, M
$$
be the holomorphic fiber bundle whose fiber over any $x\, \in\, 
M$ is the space of all isomorphisms $\phi\, :\, (Z_1)_x\,
\longrightarrow\, (Z_2)_x$ such that $\phi\circ\varphi_1 
(z\, ,v)\,=\, \varphi_2(\phi(z)\, ,v)$ 
for all $v\, \in\, {\mathcal V}_x$. Note that we have a map
$$
\varphi_x\, :\, H_x\times {\mathcal V}_x\, \longrightarrow\, 
H_x\, ,~\,~\,
\varphi_x(\phi\, ,v)(z)\,=\, \varphi_2(\phi(z)\, ,v)\,=\,
\phi\circ \varphi_1(z\, ,v)\, .
$$
There is a complex structure on $H$ uniquely determined by 
the condition that a section of $H$
defined over any open subset $U\, \subset\, M$ is holomorphic
if the corresponding map $Z_1\vert_U \, \longrightarrow\,
Z_2\vert_U$ is holomorphic.
The triple $(H\, ,q\, ,\varphi)$ is a $\mathcal V$--torsor, 
where $\varphi\vert_{H_x\times{\mathcal V}_x}\,:=\, \varphi_x$.

The map $\gamma$ in \eqref{e3} defines a $C^\infty$ section $M
\, \longrightarrow\, H$, which will also be denoted by $\gamma$. 
It is straightforward to check that $\widehat{\gamma}\,=\, 
\widehat{\tau}-\widehat{\sigma}$. Therefore, if $\widehat{\tau}
\,=\, \widehat{\sigma}$, then $\widehat{\gamma} \,=\, 0$. As 
noted before, $\widehat{\gamma}\,=\, 0$ if and only if $\gamma$ 
is holomorphic.
\end{proof}

\section{Line bundles on a complex torus}

Let $A$ be a compact complex torus. Let
\begin{equation}\label{a}
\alpha\, :\, A\times A\, \longrightarrow\, A
\end{equation}
be the addition map. Let $p_i\, :\, A\times A\, 
\longrightarrow\, A$, $i\,=\, 1\, ,2$, be the projection
to the $i$-th factor. Take a holomorphic line bundle $L$ over 
$A$. The holomorphic line bundle
\begin{equation}\label{l}
{\mathcal L}\, :=\, (p^*_1 L^*)\otimes (\alpha^*L)\,
\longrightarrow\,A\times A\,\stackrel{p_2}{\longrightarrow}\, A
\end{equation}
will be considered as a holomorphic family of holomorphic line bundles on 
$A$ parametrized by $A$. For any $x\, \in\, A$, consider
the holomorphic line bundle
\begin{equation}\label{x}
{\mathcal L}^x\, :=\, {\mathcal L}\vert_{A\times\{x\}} 
\longrightarrow\, A\, .
\end{equation}
It is topologically trivial, so ${\mathcal L}^x$ admits 
holomorphic connections.

The holomorphic cotangent bundle of $A$ will be denoted by
$\Omega_A$. Define $V\,:=\, H^0(A,\, \Omega_A)$, and let
\begin{equation}\label{v}
{\mathcal V}\,:=\, A\times V\, \longrightarrow\, A
\end{equation}
be the trivial holomorphic vector bundle. The space of all holomorphic 
connections on ${\mathcal L}^x$ is 
an affine space for $V$. We have a $\mathcal V$--torsor
\begin{equation}\label{z}
p_Z\, :\, {\mathcal Z}_L\, \longrightarrow\, A
\end{equation}
whose fiber over any point $x\, \in\, X$ is the space of all
holomorphic connections on ${\mathcal L}^x$. A holomorphic 
section of ${\mathcal Z}_L$ defined over an open subset $U\, 
\subset\, A$ is a holomorphic family of relative holomorphic 
connections on ${\mathcal L}\vert_{A\times U}$. This condition 
determines uniquely the complex structure of ${\mathcal Z}_L$.

Let
$$
0\, \longrightarrow\, {\mathcal O}_A \, \longrightarrow\,
\text{At}(L) \, \longrightarrow\, TA \, \longrightarrow\, 0
$$
be the Atiyah exact sequence for $L$ (see \cite{At}); here $TA$
is the holomorphic tangent bundle. Let
\begin{equation}\label{at}
0\, \longrightarrow\, \Omega_A \, \longrightarrow\, 
\text{At}(L)^* \, \stackrel{\lambda}{\longrightarrow}\,{\mathcal 
O}_A \, \longrightarrow\, 0
\end{equation}
be the dual of the Atiyah exact sequence. Let $1_A$ be the
section of ${\mathcal O}_A$ given by the constant function
$1$. From \eqref{at} it follows that
\begin{equation}\label{c}
p_C\, :\, {\mathcal C}_L\, :=\, \lambda^{-1}(1_A)\, 
\longrightarrow\, A
\end{equation}
is an $\Omega_A$-torsor. Note that the vector bundle $\Omega_A$ 
is canonically identified with $\mathcal V$ (defined in 
\eqref{v}) using the evaluation map on sections. 
Therefore, ${\mathcal C}_L$ is a $\mathcal V$--torsor on $A$.

\begin{theorem}\label{thm1}
The two $\mathcal V$--torsors ${\mathcal Z}_L$ and ${\mathcal 
C}_L$, constructed in \eqref{z} and \eqref{c} respectively, are 
canonically holomorphically isomorphic.
\end{theorem}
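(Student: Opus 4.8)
The plan is to reduce the statement to Proposition~\ref{prop1}. That proposition says that if we can produce a $C^\infty$ section $\sigma$ of $\mathcal Z_L$ and a $C^\infty$ section $\tau$ of $\mathcal C_L$ with $\widehat\sigma=\widehat\tau$ (the $\mathcal V$--valued $(0,1)$--forms attached to them by \eqref{e2}), then the unique $C^\infty$ isomorphism $\gamma\colon\mathcal Z_L\longrightarrow\mathcal C_L$ of $\mathcal V$--torsors carrying $\sigma$ to $\tau$, defined as in \eqref{e3}, is automatically holomorphic. So everything comes down to a good choice of $\sigma$ and $\tau$, followed by one curvature computation.

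For the choice, fix on $L$ the Chern connection $\nabla_0$ of a Hermitian metric $h$ whose curvature $\omega:=\Theta(\nabla_0)$ is translation--invariant. Such an $h$ exists, since the curvature of an arbitrary metric can be pushed to the harmonic (hence translation--invariant) representative of $c_1(L)$ by adding $\partial\overline\partial$ of a function, and $h$ is then unique up to a positive constant; therefore $\nabla_0$ is intrinsic to $L$, and the construction below involves no choices. Since $\nabla_0^{0,1}=\overline\partial_L$, the $(1,0)$--part of $\nabla_0$ is a $C^\infty$ splitting of the Atiyah sequence of $L$, i.e.\ a $C^\infty$ section $\tau$ of $\mathcal C_L$. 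On the other side, $(p_1^*\nabla_0^*)\otimes(\alpha^*\nabla_0)$ is a $C^\infty$ connection on $\mathcal L$ over $A\times A$ (the Chern connection of $(p_1^*h)^{-1}\otimes(\alpha^*h)$); restricting it to the $p_2$--relative directions and then to $A\times\{x\}$ gives a $C^\infty$ connection $\nabla_0^x$ on $\mathcal L^x=L^*\otimes t_x^*L$, where $t_x$ is translation by $x$ (here one uses $p_1|_{A\times\{x\}}=\mathrm{Id}$ and $\alpha|_{A\times\{x\}}=t_x$). Its $(0,1)$--part is $\overline\partial_{\mathcal L^x}$, and a short local computation in a holomorphic frame of $\mathcal L$ shows that its relative connection $(1,0)$--form is $a_x=t_x^*(\partial\log h)-\partial\log h$, so $\overline\partial a_x=t_x^*\omega-\omega=0$ because $\omega$ is translation--invariant. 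Hence $a_x$ is holomorphic, so $\nabla_0^x$ is an honest holomorphic connection on $\mathcal L^x$, i.e.\ a point of $(\mathcal Z_L)_x$; since $x\mapsto\nabla_0^x$ is smooth, this defines a $C^\infty$ section $\sigma$ of $\mathcal Z_L$.

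It then remains to verify $\widehat\sigma=\widehat\tau$ as $\mathcal V$--valued $(0,1)$--forms on $A$ (recalling $\mathcal V\cong\Omega_A$). Unwinding \eqref{e2} together with the complex structure on $\mathcal C_L$, the obstruction $\widehat\tau$ is the $\overline\partial$ of the connection $(1,0)$--form $\partial\log h$ of $\nabla_0$, that is, the curvature $\omega$ regarded as a $(0,1)$--form with values in $V=H^0(A,\Omega_A)$ via $\Omega_A\cong\mathcal V$. For $\widehat\sigma$ one unwinds \eqref{e2} together with the complex structure on $\mathcal Z_L$: the relevant datum is the dependence of $a_x$ on $\overline x$, and since $a_x=t_x^*(\partial\log h)-\partial\log h$ is, up to the $x$--independent term, the pullback of $\partial\log h$ under $a\mapsto a+x$, one gets $\partial_{\overline x}a_x=t_x^*\bigl(\overline\partial(\partial\log h)\bigr)=t_x^*\omega=\omega$, again by translation--invariance. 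Thus $\widehat\sigma=\widehat\tau$ with the same normalization, Proposition~\ref{prop1} applies, and the resulting $\gamma$ is a holomorphic isomorphism of $\mathcal V$--torsors; it is canonical because $\nabla_0$, and hence $\sigma$, $\tau$ and $\gamma$, depend on no choice.

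The point requiring care — and the only real obstacle — is the equality $\widehat\sigma=\widehat\tau$ \emph{on the nose}, not merely up to a coboundary (which would only reprove \cite[Proposition 2.1]{BHR}) or up to a universal constant. This forces one to track the definition \eqref{e2} and the precise complex structures on $\mathcal Z_L$ and $\mathcal C_L$ carefully, and it is exactly here that choosing $\nabla_0$ with translation--invariant curvature does the work: it makes the $\overline x$--variation of the relative connection on $\mathcal L$ coincide with the curvature of $\nabla_0$ on $L$. Geometrically this reflects the fact that $\alpha$ restricts to the identity on both $A\times\{0\}$ and $\{0\}\times A$, so that along the diagonal the mixed second derivative of $\log(\alpha^*h)$ reproduces the curvature form.
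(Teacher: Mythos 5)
Your proposal is correct and follows essentially the same route as the paper: reduce to Proposition~\ref{prop1}, take the unique unitary (Chern) connection on $L$ whose curvature is the translation--invariant representative $\omega$ of $c_1(L)$, use it to build the tautological $C^\infty$ sections of ${\mathcal C}_L$ and ${\mathcal Z}_L$, and check that both obstruction forms equal $\omega$. The only differences are cosmetic (your $\sigma$ and $\tau$ are swapped relative to the paper, and you carry out in local coordinates the curvature pullback computation that the paper does via $f_y^*(\alpha^*\omega - p_1^*\omega)=\omega$).
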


\begin{proof}
We will show that both ${\mathcal Z}_L$ and ${\mathcal C}_L$
have tautological $C^\infty$ sections.

There is a unique translation invariant, with respect to
$\alpha$ in \eqref{a}, $(1\, ,1)$--form 
$\omega$ on $A$ representing $c_1(L)\, \in\, H^2(A,\, {\mathbb 
Q})$ (any translation invariant form on $A$ is closed). There is a 
unique unitary complex connection $\nabla_L$ on 
$L$ such that the curvature of $\nabla_L$ is $\omega$. The 
hermitian structure on $L$ for $\nabla_L$ is determined uniquely 
up to multiplication by a constant positive real number. Since 
complex connections on $L$ are $C^\infty$ splittings of 
\eqref{at}, the 
connection $\nabla_L$ defines a $C^\infty$ section of the
$\mathcal V$--torsor ${\mathcal C}_L$ in \eqref{c}. Let
\begin{equation}\label{si}
\sigma\, :\, A\, \longrightarrow\, {\mathcal C}_L
\end{equation}
be this $C^\infty$ section.

Consider the holomorphic line bundle ${\mathcal L}$ in 
\eqref{l}. The connection $\nabla_L$ on $L$ pulls back to
connections on both $p^*_1 L$ and $\alpha^*L$, and the
connection on $p^*_1 L$ produces a connection on
$p^*_1 L^*$. Therefore, we get a unitary complex connection on 
${\mathcal L}$ from $\nabla_L$; this connection on ${\mathcal L}$ will be 
denoted by $\nabla_{\mathcal L}$.

For any point $x\, \in\, A$, let $\nabla^x_{\mathcal L}$ be the
restriction of $\nabla_{\mathcal L}$ to the line bundle ${\mathcal L}^x$
defined in \eqref{x}. Since the curvature of the connection 
$\nabla_L$ is translation invariant, the curvature of 
$\nabla^x_{\mathcal L}$ vanishes identically. Hence 
$\nabla^x_{\mathcal L}$ is a holomorphic connection on
${\mathcal L}^x$. Consequently, we get a $C^\infty$ section of 
the $\mathcal V$--torsor ${\mathcal Z}_L$ in \eqref{z}
\begin{equation}\label{ta}
\tau\, :\, A\, \longrightarrow\, {\mathcal Z}_L\, ,~\, ~\,
x\, \longmapsto\, \nabla^x_{\mathcal L}\, .
\end{equation}

In view of Proposition \eqref{prop1}, to prove the theorem it 
suffices to show that $\widehat{\sigma}\,=\, \widehat{\tau}$, 
where $\tau$ and $\sigma$ are constructed in \eqref{ta} and
\eqref{si} respectively. Note that a smooth
$(0\, ,1)$--form on $M$ with values in $V\, :=\, H^0(A,\, 
\Omega_A)\,=\, \Omega_A$ is a $(1\, ,1)$--form on $A$.

As before, $\omega$ denotes the curvature of $\nabla_L$.
It is standard that
\begin{equation}\label{i1}
\widehat{\sigma}\,=\, \omega
\end{equation}
(it is a general expression of the curvature of a connection in 
terms of the splitting of the Atiyah exact sequence for the 
connection).

Consider the connection $\nabla_{\mathcal L}$ on $\mathcal L$
constructed above. Since the curvature of $\nabla_L$ is 
$\omega$, it follows immediately that the curvature of
the connections on $p^*_1 L^*$ and $\alpha^*L$ are
$-p^*_1\omega$ and $\alpha^*\omega$ respectively. Hence
the curvature of $\nabla_{\mathcal L}$ is
$$
{\mathcal K}(\nabla_{\mathcal L})\,=\, \alpha^*\omega - 
p^*_1\omega
$$
(see \eqref{l}). For any $y\, \in\, A$, let
$$f_y\, :\, A\, \longrightarrow\, A\times A$$ be the section of
$p_2$ defined by $x\, \longrightarrow\, (y\, ,x)$. We have
\begin{equation}\label{i}
f^*_y{\mathcal K}(\nabla_{\mathcal L})\,=\, f^*_y\alpha^*\omega 
-f^*_y p^*_1\omega\, =\, \omega
\end{equation}
because $\omega$ is translation invariant. From \eqref{i} it
follows that 
\begin{equation}\label{i2}
\widehat{\tau}\,=\, \omega\, ,
\end{equation}
where $\tau$ is constructed in \eqref{ta} (see \eqref{e2}); the 
curvature ${\mathcal K}(\nabla_{\mathcal L})$ is the obstruction 
for the $C^\infty$ family of holomorphic connections 
$\nabla^x_{\mathcal L}$ to be holomorphic. From
\eqref{i1} and \eqref{i2} we conclude that $\widehat{\tau}\,=\,
\widehat{\sigma}$. Therefore, the proof of the theorem is complete by
Proposition \ref{prop1}.
\end{proof}

Theorem \ref{thm1} immediately implies Proposition 2.1 of
\cite{BHR} by giving the isomorphism $\eta$ in the proposition
explicitly. The factor $-1$ in Proposition 2.1 of \cite{BHR}
arises because ${\mathcal C}_L$ and ${\mathcal Z}_{L^*}$ is being
compared (instead of ${\mathcal C}_L$ and ${\mathcal Z}_L$).
Note that there is a natural holomorphic isomorphism
$\delta\, :\, {\mathcal Z}_L\, \longrightarrow\, {\mathcal Z}_{
L^*}$ between the total spaces such that $\delta(z+v)\,=\,
\delta(z)-v$ for all $v\, \in\, \mathcal V$; this is because
there is a natural bijection between the connections on a line
bundle $\zeta$ and its dual $\zeta^*$. For the same reason,
there is a holomorphic isomorphism $\delta'\, :\, {\mathcal C}_L
\, \longrightarrow\, {\mathcal C}_{L^*}$ between the total
spaces such that $\delta'(z+v)\,=\, \delta'(z)-v$ for all
$v\, \in\, \mathcal V$.

\medskip
\noindent
\textbf{Acknowledgements.}\, The first two authors thank
the Issac Newton Institute for hospitality while the work
was carried out.


\end{document}